\newtheorem{Thm}{Theorem}[section]
\newtheorem{Lem}[Thm]{Lemma}
\newtheorem{Rem}[Thm]{Remark}
\title{Stably free modules of rank $2$ over certain real smooth affine threefolds}
\author{Tariq Syed\\
Mathematisches Institut\\
Heinrich-Heine-Universit{\"a}t D{\"u}sseldorf\\
Universit{\"a}tsstra{\ss}e 1\\
40225 D{\"u}sseldorf, Germany\\
tariq.syed@gmx.de}
\date{\today} 
\begin{document}

\maketitle

\begin{abstract}
Let $R$ be a real smooth affine domain of dimension $3$ such that $R$ has either no real maximal ideals or the intersection of all real maximal ideals in $R$ has height at least $1$. Then we prove that all stably free $R$-modules of rank $2$ are free if and only if the Hermitian $K$-theory group $W_{SL}(R)$ is trivial.\\
2020 Mathematics Subject Classification: 13C10, 19A13, 19G38.\\
Keywords: stably free module, projective module, cancellation.
\end{abstract}

\tableofcontents

\section{Introduction}

A finitely generated module $P$ over a commutative ring $R$ is called stably free if there exists an isomorphism of the form $P \oplus R^m \cong R^n$ for some integers $m,n \geq 0$. A fundamental question in algebraic $K$-theory and commutative algebra is the question under which conditions stably free modules are actually free. If $R$ is a Noetherian commutative ring of Krull dimension $d$, then classical results imply that a stably free $R$-module of rank $\geq d+1$ is always free (cf. \cite[Chapter IV, Theorem 3.4]{HB}). Improvements of this classical result were proven for affine algebras over algebraically closed fields: If $R$ is affine algebra of dimension $d$ over an algebraically closed field $k$, then any stably free $R$-module of rank $d$ is free (cf. \cite{S1}); furthermore, if $R$ is a smooth affine algebra of dimension $d \geq 3$ over an algebraically closed field $k$ with $(d-1)! \in k^{\times}$, then stably free $R$-modules of rank $d-1$ are free (cf. \cite[Theorem 7.5]{FRS}).\\
For affine algebras over $\mathbb{R}$, analogues of the results over algebraically closed fields mentioned above do not exist in general. For example, if we let $R = \mathbb{R}[x,y,z]/\langle x^2 + y^2 + z^2 -1 \rangle$ be the real algebraic $2$-sphere, then it is well-known that the kernel of the $R$-linear homomorphism $(x,y,z): R^3 \rightarrow R, (\lambda_{1},\lambda_{2},\lambda_{3}) \mapsto \lambda_{1} x + \lambda_{2} y + \lambda_{3} z$ is a stably free module of rank $2$ which is not free. As shown in S. Banerjee's beautiful work (cf. \cite{B}), the situation for affine $\mathbb{R}$-algebras changes substantially when one focuses on specific $\mathbb{R}$-algebras:

\begin{Thm}[{{\cite[Theorem 2.8]{B}}}]\label{T1.1}
Let $R$ be a commutative ring satisfying the following condition \textbf{P}: The ring $R$ is an affine $\mathbb{R}$-algebra of dimension $d$ such that
\begin{itemize}
\item it has no real maximal ideals or
\item the intersection of all real maximal ideals in $R$ has height at least $\geq 1$.
\end{itemize}
Then all stably free $R$-modules of rank $d$ are free.
\end{Thm}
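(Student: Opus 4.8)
\emph{Proof plan.} The plan is to reduce cancellation to completability of unimodular rows, to observe that after complexification the obstruction dies by Suslin's theorem over algebraically closed fields, and then to use condition \textbf{P} to remove the remaining $2$-primary part. First I would record the standard translation: as $R$ is Noetherian of dimension $d$, Bass's stable range bound $\operatorname{sr}(R)\le d+1$ shows that a stably free $R$-module $P$ of rank $d$ satisfies $P\oplus R\cong R^{d+1}$, so $P\cong P_{v}=\ker\!\big(v\colon R^{d+1}\twoheadrightarrow R\big)$ for some unimodular row $v\in\operatorname{Um}_{d+1}(R)$, and $P_{v}$ is free exactly when $v$ is completable. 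It therefore suffices to prove that every $v\in\operatorname{Um}_{d+1}(R)$ is completable; after an elementary change $v$ may be assumed to have its last $d$ coordinates generating an ideal $J$ with $\operatorname{ht}(J)=d$, and then completability of $v$ is detected by the vanishing of the Euler class $e(J,\omega_{J})\in E^{d}(R)$ in the sense of Bhatwadekar--Sridharan, where $\omega_{J}$ is the local orientation induced by the first coordinate of $v$. Since the Euler class group is defined for all affine $\mathbb{R}$-algebras of dimension $d$ with no smoothness hypothesis, this is exactly what permits \textbf{P} to be the only assumption on $R$.

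Next I would complexify. The ring $R_{\mathbb{C}}:=R\otimes_{\mathbb{R}}\mathbb{C}$ is an affine $\mathbb{C}$-algebra of dimension $d$, so by Suslin's theorem over algebraically closed fields (cf.\ \cite{S1}) every stably free $R_{\mathbb{C}}$-module of rank $d$ is free; equivalently, the image of the above Euler class in $E^{d}(R_{\mathbb{C}})$ vanishes. As $R\hookrightarrow R_{\mathbb{C}}$ is a finite \'etale extension of degree $2$, a transfer argument --- or a direct analysis of the kernel of $E^{d}(R)\to E^{d}(R_{\mathbb{C}})$ --- should show that the subgroup of $E^{d}(R)$ generated by Euler classes of stably free modules of rank $d$ is annihilated by $2$. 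Hence the only surviving obstruction to completing a unimodular row of length $d+1$ over $R$ is $2$-primary, and it must be concentrated at the real points of $\operatorname{Spec}(R)$: were there none, the argument would already be finished.

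The remaining and most delicate step is to use condition \textbf{P} to kill this $2$-primary part. Let $I$ be the intersection of all real maximal ideals of $R$ and put $Y:=V(I)\subseteq\operatorname{Spec}(R)$, so that $Y=\varnothing$ in the first case of \textbf{P} and $\dim Y\le d-1$ in the second. I would use a Swan--Bertini-type general position argument to move $v$, within its elementary orbit, so that the zero scheme $V(J)$ becomes a reduced finite scheme disjoint from $Y$ --- equivalently, supported only at closed points with residue field $\mathbb{C}$ rather than $\mathbb{R}$. The Euler class of such a configuration lies in the part of $E^{d}(R)$ supported away from $Y$, and the crux is to prove that this part has no nonzero $2$-torsion: away from the real locus the top complete-intersection obstruction behaves as over an algebraically closed field, where Suslin's theorem already makes it vanish. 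I expect the verification of this claim --- by way of the real Nullstellensatz and the Artin--Lang homomorphism theorem together with the Bhatwadekar--Sridharan local--global principle for the Euler class group --- to be the main obstacle, along with the moving lemma needed to push $V(J)$ off $Y$ while keeping $v$ unimodular. Once these are in hand, combining the degree-$2$ annihilation with the $2$-torsion-freeness forces $e(J,\omega_{J})=0$, so $v$ is completable; as $v$ was arbitrary, every stably free $R$-module of rank $d$ is free.
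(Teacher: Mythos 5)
The paper does not prove Theorem \ref{T1.1} at all: it is imported verbatim from \cite[Theorem 2.8]{B}, so there is no in-paper proof to compare against. The actual argument in \cite{B} (whose key ingredient is restated in this paper as Theorem \ref{T2.5}) is different from yours: condition \textbf{P} is used, via the real Nullstellensatz and the structure of real maximal ideals, to show that every $v \in Um_{d+1}(R)$ is elementarily equivalent to a row of the form $(w_{1},\ldots,w_{d+1}^{n})$ for any prescribed $n$; taking $n=d!$ and invoking Suslin's completion theorem for factorial rows shows every unimodular row of length $d+1$ is completable, whence every stably free module of rank $d$ is free. Your opening reduction (stable range to get $P\oplus R\cong R^{d+1}$, then completability of the associated row) agrees with this, and your instinct that the real locus is the only obstruction is right in spirit, but the machinery you choose does not deliver the conclusion.

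The genuine gap is at the foundation of your strategy. The Bhatwadekar--Sridharan theorem says that $e(P,\chi)=0$ in $E^{d}(A)$ if and only if $P$ splits off a free rank-one direct summand, \emph{not} that $P$ is free; equivalently, for $P=P_{v}$ it does not detect completability of $v$. Vanishing of the Euler class gives $P_{v}\cong Q\oplus R$ with $Q$ stably free of rank $d-1$, and to conclude $P_{v}\cong R^{d}$ you would have to cancel one copy of $R$ from $Q\oplus R^{2}\cong R^{d+1}$, i.e.\ prove freeness of the stably free rank-$d$ module $Q\oplus R$ --- which is exactly the statement being proved. So the claim ``completability of $v$ is detected by the vanishing of $e(J,\omega_{J})$'' is false as stated and the argument is circular. (If completability were equivalent to Euler-class vanishing, Suslin's theorem \cite{S1} over algebraically closed fields would follow immediately from divisibility of the Euler class group, which is not how that theorem is proved.) In addition, the two steps you yourself flag --- a transfer/projection formula for $E^{d}$ along the degree-$2$ extension $R\to R\otimes_{\mathbb{R}}\mathbb{C}$ for a possibly non-smooth $R$, and the absence of $2$-torsion in the part of $E^{d}(R)$ supported away from the real locus --- are unestablished, and even granting them they would only yield the splitting $P\cong Q\oplus R$, not freeness.
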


The purpose of this article is prove a result on stably free modules of rank $d-1$ over regular integral domains of dimension $d$ satisfying condition \textbf{P} in Theorem \ref{T1.1} in the first interesting case, namely in dimension $3$ (in dimensions $d=1$ and $d=2$, stably free modules of rank $d-1$ are easily seen to be free). We prove the following cohomological criterion for all stably free modules of rank $2$ over a regular integral domain of dimension $3$ satisfying condition \textbf{P} to be free (cf. Theorem \ref{T3.4}):

\begin{Thm}\label{T1.2}
Let $R$ be a regular integral domain of dimension $d=3$ satisfying condition $\textbf{P}$ from Theorem \ref{T1.1}. Then all stably free $R$-modules of rank $2$ are free if and only if $W_{SL}(R) = 0$.
\end{Thm}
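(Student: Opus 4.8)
The plan is to translate the statement into a question about unimodular rows of length $3$ and then to compare the orbit set $Um_3(R)/GL_3(R)$ with the Hermitian $K$-theory group $W_{SL}(R)$ via a Vaserstein-type symbol; this is the real-base analogue of \cite[Theorem 7.5]{FRS}, with $W_{SL}(R)$ playing the role of the obstruction that is absent over algebraically closed fields.

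\textbf{Reduction to unimodular rows.} First I would observe that every stably free $R$-module $P$ of rank $2$ satisfies $P \oplus R \cong R^3$: if $P \oplus R^m \cong R^{m+2}$, then $P \oplus R$ is stably free of rank $3 = \dim R$, hence free by Theorem \ref{T1.1}. Writing $P = \ker(v)$ for $v \in Um_3(R)$, one checks that $\ker(v) \cong \ker(w)$ as $R$-modules precisely when $w \in v \cdot GL_3(R)$, and $\ker(v)$ is free precisely when $v \in e_1 \cdot GL_3(R)$. Hence all stably free $R$-modules of rank $2$ are free if and only if the pointed set $Um_3(R)/GL_3(R)$ is trivial, and the theorem reduces to showing that $Um_3(R)/GL_3(R)$ is trivial if and only if $W_{SL}(R) = 0$.

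\textbf{The Vaserstein symbol.} Since $R$ is a regular affine $\mathbb{R}$-algebra, $1/2 \in R$ and $R$ is smooth of dimension $3$ over $\mathbb{R}$, so the (generalized) Vaserstein symbol $V \colon Um_3(R)/E_3(R) \to W_E(R)$ into the elementary symplectic Witt group is available, together with its functoriality and its behaviour under the $GL_3(R)$-action. I would invoke the injectivity of the Vaserstein symbol over regular rings of Krull dimension $\leq 3$, and recall that $W_{SL}(R)$ is precisely the quotient of $W_E(R)$ by the subgroup accounting for the passage from $E_3(R)$-equivalence to $GL_3(R)$-equivalence of rows (equivalently, a suitable quotient of a Grothendieck–Witt group), so that $V$ induces a well-defined map of pointed sets $\overline V \colon Um_3(R)/GL_3(R) \to W_{SL}(R)$ sending the class of $e_1$ to $0$. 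The theorem then amounts to proving that $\overline V$ is a bijection.

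\textbf{Bijectivity of $\overline V$ and the role of condition \textbf{P}.} Surjectivity of $\overline V$ is the more formal half and yields the implication ``all stably free modules of rank $2$ are free $\Rightarrow W_{SL}(R) = 0$'': every class of $W_{SL}(R)$ is represented by a rank-$4$ alternating space over $R$, which over a threefold one reduces to the image of a unimodular $3$-row, and if every such row is completable the class must vanish. Injectivity of $\overline V$ is the substantive half and gives the converse: one has to show that a row $v$ with $\overline V(v) = 0$ is $GL_3(R)$-equivalent to $e_1$. I expect this to be the main obstacle. Over $\mathbb{R}$, the Witt- and Chow–Witt-theoretic groups into which the completability obstruction of $v$ maps — top Nisnevich cohomology with coefficients in the sheaves $\mathbf{I}^j$ or in Grothendieck–Witt sheaves — carry extra classes supported at the real points of $\operatorname{Spec} R$, and it is exactly these that can obstruct completability and that are absent over an algebraically closed base. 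Condition \textbf{P} forces these contributions to vanish: if $R$ has no real maximal ideals, or if the real locus is contained in a closed subscheme of codimension $\geq 1$, then the relevant top cohomology groups vanish, by the same mechanism underlying Theorem \ref{T1.1}. Feeding this vanishing into the coniveau/Gersten spectral sequence for the relevant sheaves shows that $\overline V$ is injective, and combining the two halves produces the bijection $Um_3(R)/GL_3(R) \cong W_{SL}(R)$, from which the theorem follows.
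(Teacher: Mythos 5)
Your reduction to unimodular rows and your overall strategy (a Vaserstein-type symbol inducing a bijection between the orbit set of $Um_3(R)$ and $W_{SL}(R)$) match the paper's skeleton, but the proposal has a genuine gap exactly where the real work lies: the injectivity of the symbol. There is no general theorem asserting that the Vaserstein symbol $V\colon Um_3(R)/E_3(R)\to W_E(R)$ is injective for regular rings of Krull dimension $3$; the classical Suslin--Vaserstein result gives bijectivity only in dimension $\leq 2$, and in dimension $3$ the injectivity of the symbol modulo $SL$ is available only via the criterion of \cite[Theorems 3.2 and 3.6]{Sy1}, namely that $Sp(\chi)$ acts transitively on $Um_4^{t}(R)$ for every invertible alternating $4\times 4$ matrix $\chi$. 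Establishing this transitivity under condition \textbf{P} is the main content of the paper (Theorem \ref{T3.1}), and it is proved by a concrete algebraic argument: by Theorem \ref{T2.5} every row is elementarily equivalent to one of the form $(v_1,\dots,v_{d+1}^{2d!^2})$; such a row is the first row of a Suslin-type matrix $\beta_{d+1}(v',w')$; one shows that the class of this matrix in $SK_1(R)$ lies in $\ker(H)=\operatorname{im}\bigl(K_1Sp(R)\to K_1(R)\bigr)$ (using the $2$-torsion of $W_E(S_{2(d+1)-1})$ together with Lemma \ref{L2.3}, or \cite[Proposition 3.3.3]{AF} when $d+1\equiv 2\bmod 4$); and then Lemmas \ref{L2.1} and \ref{L2.2} together with the injectivity of $SL_{d+1}(R)/E_{d+1}(R)\to SK_1(R)$ (Theorem \ref{T2.4}) convert this $K$-theoretic statement into an actual symplectic completion of $v$. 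None of this is supplied by your sketch, and invoking ``injectivity of the Vaserstein symbol over regular rings of dimension $\leq 3$'' assumes precisely what has to be proved.

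Your fallback argument for injectivity --- that condition \textbf{P} kills the ``real'' contributions to top Nisnevich cohomology with coefficients in $\mathbf{I}^{j}$ or Grothendieck--Witt sheaves, and that feeding this into a Gersten/coniveau spectral sequence yields injectivity of $\overline{V}$ --- is not a proof. You do not identify which cohomology group receives the obstruction to injectivity of the symbol, why the fibres of $V$ would be controlled by such a group, or why condition \textbf{P} (a hypothesis on real maximal ideals, not on an arbitrary ``real locus of codimension $\geq 1$'') forces the relevant vanishing; the paper's mechanism is instead entirely algebraic, resting on Banerjee's divisibility and $K_1$-stability results. The surjectivity half and the reduction via Theorem \ref{T1.1} are fine; note only that the symbol naturally descends to $Um_3(R)/SL_3(R)$ rather than $Um_3(R)/GL_3(R)$, which is what gives the bijection with $W_{SL}(R)$ (for the mere ``free iff trivial'' statement this does not matter). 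To complete the argument you would need to prove the transitivity statement of Theorem \ref{T3.1} or an equivalent substitute.
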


The abelian group $W_{SL}(R)$ is a Hermitian $K$-theory group and was introduced in \cite[\S 3]{SV}. Theorem \ref{T1.2} is proven by analyzing the generalized Vaserstein symbol modulo SL introduced in \cite[Section 3.A]{Sy1}; as a matter of fact, we prove that this map is bijective in Theorem \ref{T3.3} and hence induces a group structure on the set of isomorphism classes of oriented stably free $R$-modules of rank $2$ (Remark \ref{R3.5}). The bijectivity of the Vaserstein symbol is essentially a consequence of a transitivity statement on the actions of certain symplectic groups on the set of unimodular rows of length $4$ over $R$, which is proven in Theorem \ref{T3.1}.\\
The paper is structured as follows: In Section \ref{2} we give a brief introduction unimodular rows, the groups $W_E (R)$ and $W_{SL}(R)$ of a commutative ring $R$ as needed for this paper and recall some statements proven in \cite{B} and \cite{Sy2} which will be used in the proof of the main results of this paper. Then we prove the main results of this paper in Section \ref{3}.

\section{Preliminaries}\label{Preliminaries}\label{2}

Let $R$ be a commutative ring with unit and $n \geq 1$ an integer. A unimodular row of length $n$ over $R$ is a row vector $(v_{1},...,v_{n})$ of length $n$ with $v_{i} \in R$, $1 \leq i \leq n$, such that $\langle v_{1},...,v_{n} \rangle = R$. We denote the set of unimodular rows of length $n$ over $R$ by $Um_{n}(R)$. The group $GL_{n}(R)$ of invertible $n \times n$-matrices acts on the right on $Um_{n}(R)$ by matrix multiplication. The same holds automatically for any subgroup of $GL_{n}(R)$; in particular, the subgroup $SL_{n}(R)$ of matrices with determinant $1$, the subgroup $E_{n}(R)$ generated by elementary matrices act on the right on $Um_{n}(R)$. If $\chi$ is an alternating invertible matrix of rank $2n$, then the subgroup $Sp(\chi)$ of $GL_{2n}(R)$ consisting of matrices which are symplectic with respect to $\chi$ act on the right on $Um_{2n}(R)$. For integers $n,i$ with $1 \leq i \leq n$, we denote by $\pi_{i,n}$ the unimodular row $(0,...,0,1,0,...,0)$ of length $n$ over $R$ with $1$ in the $i$th slot and $0$'s elsewhere and $\pi_{n}=\pi_{n,n}$.\\
Similarly, a unimodular column of length $n$ over $R$ is a column vector ${(v_{1},...,v_{n})}^{t}$ of length $n$ such that $(v_{1},...,v_{n}) \in Um_{n}(R)$. We denote the set of unimodular columns of length $n$ over $R$ by $Um_{n}^{t}(R)$. The group $GL_{n}(R)$ and hence all its subgroups act on the left on $Um_{n}^{t}(R)$ by matrix multiplication. If $\chi$ is an alternating invertible matrix of rank $2n$, then the group $Sp(\chi)$ acts on the left on $Um_{2n}^{t}(R)$. For integers $n,i$ with $1 \leq i \leq n$, we let $e_{i,n} = \pi_{i,n}^{t}$ and $e_{n}=e_{n,n}$.\\
For any integer $n \geq 1$, we let $A_{2n}(R)$ denote the set of invertible alternating matrices of rank $2n$. We denote by $\psi_{2n} \in  A_{2n} (R)$ the matrix which is defined inductively by
 
\begin{center}
$\psi_2 =
\begin{pmatrix}
0 & 1 \\
- 1 & 0
\end{pmatrix}
$
\end{center}
 
\noindent and $\psi_{2n+2} = \psi_{2n} \perp \psi_2$. Then we obtain embeddings $A_{2m} (R) \rightarrow A_{2n} (R)$, $M \mapsto M \perp \psi_{2n-2m}$ for any $m < n$ and we let $A (R)$ denote the direct limit of the sets $A_{2n} (R)$ under these embeddings. If $M \in A_{2m} (R)$ and $N \in A_{2n} (R)$, we call them equivalent, $M \sim N$, if there is an integer $l \geq 1$ and a matrix $E \in E_{2n+2m+2l}(R)$ such that

\begin{center}
$M \perp \psi_{2n+2l} = E^{t} (N \perp \psi_{2m+2l}) E$.
\end{center}

It is easy to see that this defines an equivalence relation on $A(R)$ and the corresponding set of equivalence classes $A(R)/{\sim}$ is denoted $W'_E (R)$. It follows from \cite[\S 3]{SV} that the orthogonal sum of matrices induces the structure of an abelian group on $W'_E (R)$. The subgroup generated by alternating invertible matrices with Pfaffian $1$ will be denoted $W_E (R)$ and is called the elementary symplectic Witt group of $R$.\\
Following \cite[Section 2.1]{Sy2}, we have an exact Karoubi periodicity sequence

\begin{center}
$K_{1}{Sp} (R) \xrightarrow{f} K_{1} (R) \xrightarrow{H} W'_{E} (R) \xrightarrow{\eta} K_{0}{Sp} (R) \xrightarrow{f'} K_{0} (R)$
\end{center}

involving the group $W'_E (R)$ and classical algebraic $K$-theory and symplectic $K$-theory groups. Here the homomorphisms $f$ and $f'$ are both the usual forgetful homomorphisms. Furthermore, the map $K_{1} (R) \xrightarrow{H} W'_{E} (R)$ is given by the assignments $M \mapsto M^{t} \psi_{2n} M$ for all $M \in GL_{2n} (R)$, while the homomorphism $W'_{E} (R) \xrightarrow{\eta} K_{0}{Sp} (R)$ is given by the assignments $M \mapsto [R^{2n}, M] - [R^{2n},\psi_{2n}]$ for all $M \in A_{2n} (R)$. Again following \cite[Section 2.1]{Sy2}, the sequence above can be rewritten as

\begin{center}
$K_{1}{Sp} (R) \xrightarrow{f} SK_{1} (R) \xrightarrow{H} W_{E} (R) \xrightarrow{\eta} K_{0}{Sp} (R) \xrightarrow{f'} K_{0} (R)$.
\end{center}

We let $W_{SL}(R)$ denote the cokernel of the homomorphism $SK_{1} (R) \xrightarrow{H} W_E (R)$. We refer the reader to \cite[Section 2.1]{Sy2} for details on the groups $W'_E (R)$, $W_E (R)$ and $W_{SL}(R)$. The following lemma characterizes matrices which lie in the kernel of the map $H$ above and will be used in the proof of Theorem \ref{T3.1}:

\begin{Lem}[{{\cite[Lemma 2.1]{Sy2}}}]\label{L2.1}
Let $R$ be a commutative ring and let $\chi \in A_{2n} (R)$. If $\varphi \in GL_{2n} (R)$ such that its class $[\varphi] \in K_{1} (R)$ lies in $\ker (H)$, then there are $m \in \mathbb{N}$ and $\varphi' \in SL_{2n+2m} (R)$ such that $[\varphi] = [\varphi'] \in K_1 (R)$ and $\varphi'$ is symplectic with respect to $\chi \perp \psi_{2m}$.
\end{Lem}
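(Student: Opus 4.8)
The plan is to combine the exactness of the Karoubi periodicity sequence at $K_1(R)$ with an elementary block-diagonal construction. Since $[\varphi]$ lies in $\ker(H : K_1(R) \to W'_E(R))$, exactness of the sequence $K_{1}Sp(R) \xrightarrow{f} K_{1}(R) \xrightarrow{H} W'_{E}(R)$ shows that $[\varphi]$ is in the image of the forgetful homomorphism $f$. Unwinding the definition of $K_{1}Sp(R)$ as a colimit of the symplectic groups $Sp_{2k}(\psi_{2k})$, this means that there exist an integer $k \geq 1$ and a matrix $\sigma \in Sp_{2k}(\psi_{2k})$, i.e.\ a matrix with $\sigma^{t} \psi_{2k} \sigma = \psi_{2k}$, whose class in $K_{1}(R)$ --- regarding $\sigma$ now as an ordinary invertible matrix --- equals $[\varphi]$.

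Next I would set $m := k$ and $\varphi' := I_{2n} \perp \sigma \in GL_{2n+2m}(R)$. For block-diagonal matrices one has $(A \perp B)^{t}(\chi \perp \psi_{2m})(A \perp B) = (A^{t}\chi A) \perp (B^{t}\psi_{2m}B)$, so $A \perp B$ is symplectic with respect to $\chi \perp \psi_{2m}$ as soon as $A$ is symplectic with respect to $\chi$ and $B$ is symplectic with respect to $\psi_{2m}$. Taking $A = I_{2n}$, which trivially satisfies $I_{2n}^{t}\chi I_{2n} = \chi$, and $B = \sigma$, we conclude that $\varphi'$ is symplectic with respect to $\chi \perp \psi_{2m}$. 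Since symplectic matrices have determinant $1$, we also get $\det \varphi' = \det \sigma = 1$, so in fact $\varphi' \in SL_{2n+2m}(R)$.

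It then remains to check that $[\varphi'] = [\varphi]$ in $K_{1}(R)$. Conjugation by a suitable permutation matrix identifies $I_{2n} \perp \sigma$ with $\sigma \perp I_{2n}$, and since conjugation acts trivially on the abelian group $K_{1}(R)$ and adjoining identity blocks does not change the class, we obtain $[\varphi'] = [I_{2n} \perp \sigma] = [\sigma \perp I_{2n}] = [\sigma] = [\varphi]$, as desired. The one point that requires a little care is the first step, namely that an element in the image of $f$ can be represented by an honest (unstabilized) symplectic matrix for the standard alternating form $\psi_{2k}$; this is immediate once $K_{1}Sp(R)$ is spelled out as the colimit of the groups $Sp_{2k}(\psi_{2k})$, and everything after it is just the elementary fact that an orthogonal sum of symplectic matrices is again symplectic.
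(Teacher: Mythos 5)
Your proof is correct. Note that the paper does not actually prove this statement---it is imported verbatim from \cite[Lemma 2.1]{Sy2}---but your argument is the natural one and, to the best of my knowledge, coincides with the one given there: exactness of the Karoubi periodicity sequence at $K_{1}(R)$ produces a genuine symplectic matrix $\sigma \in Sp_{2m}(R)$ (for the standard form $\psi_{2m}$) with $[\sigma]=[\varphi]$, and the block sum $I_{2n}\perp\sigma$ is then symplectic for $\chi\perp\psi_{2m}$, has determinant $1$, and represents the same class in $K_{1}(R)$.
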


The following lemma is another important ingredient for the proof of Theorem \ref{T3.1}:

\begin{Lem}[{{\cite[Lemma 3.5]{Sy2}}}]\label{L2.2}
Let $R$ be a commutative ring and let $\chi_{1}$ and $\chi_{2}$ be invertible alternating matrices of rank $2n$ over $R$ such that ${\varphi}^{t} (\chi_{1} \perp \psi_{2}) \varphi = \chi_{2} \perp \psi_{2}$ holds for some $\varphi \in SL_{2n+2}(R)$. Furthermore, let $\chi = \chi_{1} \perp \psi_{2}$. If the equality $Um^{t}_{2n+2}(R) = (E_{2n+2}(R) \cap {Sp} (\chi)) e_{2n+2}$ holds, then one has ${\psi}^{t} \chi_{2} \psi = \chi_{1}$ for some $\psi \in SL_{2n}(R)$ such that $[\psi] = [\varphi] \in K_{1}(R)$.
\end{Lem}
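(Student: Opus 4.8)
The plan is to read the hypothesis $\varphi^{t}\chi\varphi = \chi_2\perp\psi_2$ (with $\chi = \chi_1\perp\psi_2$) as the statement that $\varphi$ is an isometry of symplectic $R$-modules from $(R^{2n+2},\chi_2\perp\psi_2)$ onto $(R^{2n+2},\chi)$. On both of these modules the last two standard basis vectors span a hyperbolic plane $R e_{2n+1}\oplus R e_{2n+2}$ on which the form is $\psi_2$, orthogonally complementing the rank-$2n$ piece (on which the form is $\chi_2$ on the source and $\chi_1$ on the target). I would modify $\varphi$ --- without moving its class in $K_1(R)$ and without disturbing the identity $\varphi^{t}\chi\varphi = \chi_2\perp\psi_2$ --- until it visibly respects these orthogonal decompositions; its restriction to the rank-$2n$ piece will then be the required matrix $\psi$. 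The role of the transitivity hypothesis $Um_{2n+2}^{t}(R) = (E_{2n+2}(R)\cap Sp(\chi))e_{2n+2}$ is precisely to make this cancellation of the hyperbolic plane possible.

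The first step is to normalise the last column of $\varphi$. Since $\varphi\in SL_{2n+2}(R)$, the column $\varphi e_{2n+2}$ is unimodular, so by hypothesis there is $\epsilon\in E_{2n+2}(R)\cap Sp(\chi)$ with $\epsilon e_{2n+2} = \varphi e_{2n+2}$, and I would replace $\varphi$ by $\epsilon^{-1}\varphi$. This is harmless on all counts: $\epsilon^{-1}$ lies in $E_{2n+2}(R)\subseteq SL_{2n+2}(R)$, so $\epsilon^{-1}\varphi$ is again in $SL_{2n+2}(R)$ and has the same class in $K_1(R)$ as $\varphi$; and since $\epsilon$ is symplectic for $\chi$ we have $(\epsilon^{-1})^{t}\chi\epsilon^{-1} = \chi$, whence $(\epsilon^{-1}\varphi)^{t}\chi(\epsilon^{-1}\varphi) = \varphi^{t}\chi\varphi = \chi_2\perp\psi_2$ still holds. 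So we may assume from now on that $\varphi e_{2n+2} = e_{2n+2}$.

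Now I would extract $\psi$. The last column of $\chi = \chi_1\perp\psi_2$ is $e_{2n+1}$, and the same holds for $\chi_2\perp\psi_2$; multiplying $\varphi^{t}\chi\varphi = \chi_2\perp\psi_2$ on the right by $e_{2n+2}$ and using $\varphi e_{2n+2} = e_{2n+2}$ therefore gives $\varphi^{t}e_{2n+1} = e_{2n+1}$, i.e. the $(2n+1)$-st row of $\varphi$ equals $e_{2n+1}^{t}$. Thus $\varphi$ has last column $e_{2n+2}$ and $(2n+1)$-st row $e_{2n+1}^{t}$, so two successive cofactor expansions of $\det\varphi$ show that the leading $2n\times 2n$ block $\psi$ of $\varphi$ satisfies $\det\psi = \det\varphi = 1$; hence $\psi\in SL_{2n}(R)$. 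Comparing the leading $2n\times 2n$ blocks of the two sides of $\varphi^{t}\chi\varphi = \chi_2\perp\psi_2$ then records precisely the asserted congruence between $\chi_1$ and $\chi_2$ realised by $\psi$. Finally, every entry of $\varphi$ outside the block $\psi$ lies in its last two rows or columns and can be cleared by elementary row and column operations using the two diagonal entries equal to $1$; hence $\varphi$ is $E_{2n+2}(R)$-equivalent to $\psi$ extended by the identity on the last two coordinates, so $[\psi] = [\varphi]$ in $K_1(R)$, as required.

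None of these steps is individually deep, and I expect the only point needing genuine care to be the normalisation in the first step: the correcting matrix $\epsilon$ has to be chosen to lie in $E_{2n+2}(R)$ --- so that passing from $\varphi$ to $\epsilon^{-1}\varphi$ leaves the $K_1(R)$-class unchanged --- and simultaneously in $Sp(\chi)$ --- so that the congruence $\varphi^{t}\chi\varphi = \chi_2\perp\psi_2$ is preserved. This is exactly why the hypothesis is stated for the group $E_{2n+2}(R)\cap Sp(\chi)$ and not for the whole symplectic group $Sp(\chi)$. Once the last column has been brought to $e_{2n+2}$, the rest is a routine determinant-and-elementary-matrix computation.
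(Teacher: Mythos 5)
Your argument is the standard hyperbolic-plane cancellation and is surely, in essence, the proof of \cite[Lemma 3.5]{Sy2} (the present paper only quotes the lemma and gives no proof): use the orbit hypothesis to find $\epsilon \in E_{2n+2}(R)\cap Sp(\chi)$ with $\epsilon e_{2n+2}=\varphi e_{2n+2}$, replace $\varphi$ by $\epsilon^{-1}\varphi$ (which changes neither the $K_1$-class nor the congruence, since $\epsilon$ is elementary and $\chi$-symplectic), deduce from $\varphi^{t}\chi\varphi e_{2n+2}=e_{2n+1}$ that the $(2n+1)$-st row becomes $e_{2n+1}^{t}$, and read off $\psi$ as the leading $2n\times 2n$ block. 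The determinant computation and the elementary reduction of $\varphi$ to $\psi\perp I_2$, giving $[\psi]=[\varphi]$ in $K_1(R)$, are both correct.

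The one place you wave your hands is the phrase ``records precisely the asserted congruence,'' and it does not. Since $\chi=\chi_1\perp\psi_2$ sits in the middle of the left-hand side, the leading block of $\varphi^{t}\chi\varphi=\chi_2\perp\psi_2$ yields $\psi^{t}\chi_1\psi=\chi_2$, whereas the lemma as printed asserts $\psi^{t}\chi_2\psi=\chi_1$. These are not interchangeable for free: reversing the congruence means passing to $\psi^{-1}$, which replaces $[\psi]=[\varphi]$ by $-[\varphi]$ in $K_1(R)$. Most likely the indices in the printed statement are transposed --- and the discrepancy is immaterial for the only use of Lemma \ref{L2.2} in this paper, in the proof of Theorem \ref{T3.1}, where $\chi_1=\chi_2$ --- but as written your proof establishes the mirror image of the stated conclusion, and you should say explicitly which congruence your block comparison produces instead of asserting that it matches.
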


Now let $n \geq 1$ and let $v = (v_{1},...,v_{n}) \in Um_{n}(R)$ be a unimodular row and $w={(w_{1},...,w_{n})}^{t} \in Um^{t}_{n}(R)$ a unimodular column such that $\sum_{i=1}^{n} v_{i} w_{i} = 1$ (i.e., $w$ defines a section of $v$). Then Suslin defined matrices $\alpha_{n} (v,w) \in SL_{2^{n-1}} (R)$ called Suslin matrices in \cite{S2}; he then showed that for any such $v$ with section $w$ there exists an invertible $n \times n$-matrix $\beta (v,w)$ whose first row is $(v_{1},...,v_{n}^{(n-1)!})$ such that the classes of $\beta (v,w)$ and $\alpha_{n} (v,w)$ coincide in $K_{1} (R)$ (cf. \cite[Proposition 2.2 and Corollary 2.5]{S3}).

\begin{Lem}[{{\cite[Lemma 2.2]{Sy2}}}]\label{L2.3}
Let $R$ be an affine algebra over a perfect field $k$ and let $l \geq 1$ and $n \geq 3$. Furthermore, let $v = (v_{1},...,v_{n}) \in Um_{n} (R)$ be a unimodular row with a section $w$ and let $v' = (v_{1},...,v_{n}^{l}) \in Um_{n} (R)$ with any section $w'$. If $l$ is even, then $[\alpha_{n}(v',w')] \in SK_{1} (R)$ is an $\dfrac{l}{2}$-fold multiple of an element in $SK_{1}(R)$.
\end{Lem}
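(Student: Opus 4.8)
The plan is to reduce the statement to the additivity of Suslin-matrix classes in their last coordinate, and then to exploit that $l$ is even in order to split the exponent into blocks of size two.

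The first ingredient is the following property of Suslin matrices of unimodular rows of length $n \ge 3$, which goes back to Suslin (see \cite{S2}, \cite{S3}): for a unimodular row $u = (u_1, \dots, u_n)$ with section $s$, the class $[\alpha_n(u, s)] \in SK_1(R)$ does not depend on the choice of $s$, so that one may write $[\alpha_n(u)]$; and whenever $a, b \in R$ are such that both $(u_1, \dots, u_{n-1}, a)$ and $(u_1, \dots, u_{n-1}, b)$ lie in $Um_n(R)$, one has
\[ [\alpha_n(u_1, \dots, u_{n-1}, ab)] = [\alpha_n(u_1, \dots, u_{n-1}, a)] + [\alpha_n(u_1, \dots, u_{n-1}, b)] \]
in $SK_1(R)$. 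Concretely, the second point comes from the fact that the block-diagonal sum of the matrices $\alpha_n(u_1, \dots, u_{n-1}, a)$ and $\alpha_n(u_1, \dots, u_{n-1}, b)$ becomes, after stabilizing by an identity block, elementarily equivalent to $\alpha_n(u_1, \dots, u_{n-1}, ab)$; the restriction $n \ge 3$ (and, in establishing these facts, the standing hypotheses that $R$ be affine over a perfect field) enters precisely here, via the results of Suslin recalled in the preliminaries. Note that this already makes the eventual conclusion insensitive to the section $w'$ of $v'$, as it must be.

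Next I would observe that if $(v_1, \dots, v_{n-1}, v_n) \in Um_n(R)$, then $(v_1, \dots, v_{n-1}, v_n^k) \in Um_n(R)$ for every $k \ge 1$: indeed, modulo the ideal $\langle v_1, \dots, v_{n-1} \rangle$ the element $v_n$, hence also $v_n^k$, is a unit. Write $l = 2m$ with $m \ge 1$. Factoring $v_n^{2m} = v_n^{2(m-1)} \cdot v_n^{2}$ and applying the additivity above with $a = v_n^{2(m-1)}$ and $b = v_n^{2}$ — both of which yield unimodular rows by the previous remark — a straightforward induction on $m$, with trivial base case $m = 1$, gives
\[ [\alpha_n(v', w')] = [\alpha_n(v_1, \dots, v_{n-1}, v_n^{2m})] = m \cdot [\alpha_n(v_1, \dots, v_{n-1}, v_n^{2})] \]
in $SK_1(R)$. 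Since $\det \alpha_n(v_1, \dots, v_{n-1}, v_n^{2}) = 1$, the element $[\alpha_n(v_1, \dots, v_{n-1}, v_n^{2})]$ lies in $SK_1(R)$, and the displayed equality exhibits $[\alpha_n(v', w')]$ as an $m = \tfrac{l}{2}$-fold multiple of an element of $SK_1(R)$, which is what was wanted.

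I expect the only genuine work to be in the additivity property quoted above: once one knows that the class of a Suslin matrix is additive in its last entry for rows of length $\ge 3$, the rest — stability of unimodularity under taking powers, and grouping the exponent into pairs, which is exactly the point at which the parity of $l$ is used (for odd $l$ a stray factor $v_n$ would survive and the class would only be $\tfrac{l-1}{2}$-fold plus $[\alpha_n(v)]$) — is routine bookkeeping.
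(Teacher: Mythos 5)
Your argument stands or falls with the asserted additivity
\[
[\alpha_n(u_1,\dots,u_{n-1},ab)]=[\alpha_n(u_1,\dots,u_{n-1},a)]+[\alpha_n(u_1,\dots,u_{n-1},b)]
\quad\text{in } SK_1(R),
\]
and this is exactly where the gap is. What Suslin actually proves in \cite{S2} and \cite{S3} (and what the present paper records in Section \ref{2}) is: independence of $[\alpha_n(v,w)]$ from the section $w$, invariance under the $E_n(R)$-action on $v$, and the existence of $\beta(v,w)\in GL_n(R)$ with first row $(v_1,\dots,v_n^{(n-1)!})$ and the same $K_1$-class as $\alpha_n(v,w)$. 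Unconditional multiplicativity of $v\mapsto[\alpha_n(v,w)]$ in the last coordinate is precisely the Mennicke-symbol axiom (MS2) for rows of length $n$, and that is \emph{not} available for arbitrary $n\ge 3$ over an affine algebra of unrestricted dimension; it is only known under stability-type hypotheses such as $n\ge \dim R+2$ (van der Kallen, Suslin, Vaserstein), or for $n=2$. Attributing it to ``Suslin (see \cite{S2}, \cite{S3})'' without a precise statement does not close this hole, and the lemma's hypotheses impose no bound on $\dim R$ relative to $n$.

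There is also an internal warning sign you should have heeded: if your additivity held, you would immediately get $[\alpha_n(v_1,\dots,v_n^l,w')]=l\cdot[\alpha_n(v,w)]$ for \emph{every} $l\ge 1$, i.e.\ an $l$-fold multiple with no parity assumption --- a strictly stronger statement than the lemma, which would make the hypothesis ``$l$ even'' and the conclusion ``$\tfrac{l}{2}$-fold'' pointless. (Your closing parenthetical about odd $l$ leaving a stray factor is inconsistent with your own additivity claim, under which odd $l$ causes no loss at all.) The careful formulation of the lemma reflects the fact that only a weaker relation is available: roughly, squaring the last coordinate splits the class as a sum $[\alpha_n(v,w)]+[\alpha_n(\tilde v,\tilde w)]$ of the class of $v$ and of a companion row, and iterating such a relation is what yields divisibility by $\lfloor l/2\rfloor$ while leaving an unabsorbed summand when $l$ is odd. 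To repair your proof you would need to either restrict to the stable range where MS2 genuinely holds (which would not suffice for the application in Theorem \ref{T3.1}) or replace the additivity by the correct quadratic relation from \cite{Sy2} and rerun the induction in steps of two.
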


For all $n \geq 1$ and any field $k$, let $S_{2n-1} = (k[x_{1},...,x_{n},y_{1},...,y_{n}]/\langle \sum_{i=1}^{n} x_{i}y_{i} - 1 \rangle$. Note that $x=(x_{1},...,x_{n})$ is a unimodular row of length $n$ over $S_{2n-1}$ with section given by $y=(y_{1},...,y_{n})$. Now assume that $R$ is an affine $k$-algebra. Then it is easy to see that one has a correspondence

\begin{center}
$\{(a,b)|a,b \in \mathit{Um}_{n} (R), a b^{t} = 1\} = Hom_{{k}\textit{-Alg}} (S_{2n-1},R)$,
\end{center}

where $k$-Alg is the category of $k$-algebras. The rings $S_{2n-1}$ will be considered in the proof of Theorem \ref{T3.1}.\\
We now conclude this section by recalling two important results from \cite{B} on certain commutative rings satisfying the condition $\textbf{P}$ from the introduction.

\begin{Thm}[{{\cite[Theorem 5.1]{B}}}]\label{T2.4}
Let $R$ be a regular integral domain satisfying condition $\textbf{P}$ from Theorem \ref{T1.1}. Then the map $SL_{d+1}(R)/E_{d+1}(R) \rightarrow SK_{1}(R)$ is bijective.
\end{Thm}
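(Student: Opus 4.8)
The plan is to derive Theorem~\ref{T2.4} from classical $K_1$-stability by feeding in the one extra unit of stability supplied by condition $\textbf{P}$; write $d = \dim R$ throughout.

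First I would isolate the formal part. Since $R$ is Noetherian of Krull dimension $d$, Bass's stable range estimate gives $sr(R) \le d+1$, so $SL_{d+2}(R)/E_{d+2}(R) \to SK_1(R)$ is an isomorphism and $SL_{d+1}(R)/E_{d+1}(R) \to SK_1(R)$ is surjective (given $\sigma \in SL_{d+2}(R)$, the bottom row lies in $Um_{d+2}(R)$ and is carried to $\pi_{d+2}$ by some $\varepsilon \in E_{d+2}(R)$ because $sr(R) \le d+1$; after one more elementary step $\sigma$ becomes $\beta \oplus 1$ with $\beta \in SL_{d+1}(R)$, whence $[\sigma] = [\beta]$). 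So everything reduces to \emph{injectivity} of $SL_{d+1}(R)/E_{d+1}(R) \to SK_1(R)$. Chasing a class that dies in $SK_1(R)$ through the classical destabilizations $E_N(R)\cap(SL_{N-1}(R)\oplus 1) = E_{N-1}(R)\oplus 1$, valid unconditionally for $N \ge d+3$, reduces injectivity to the single step
\begin{center}
$E_{d+2}(R)\cap(SL_{d+1}(R)\oplus 1) = E_{d+1}(R)\oplus 1$,
\end{center}
i.e.\ to destabilizing $SL$ one notch below the Bass--Vaserstein range. This step fails for general Noetherian $d$-folds --- already for $d = 1$ and the ring of integers $\mathcal{O}_K$ of an imaginary quadratic field of class number $> 1$, where $SL_2(\mathcal{O}_K) \neq E_2(\mathcal{O}_K)$ but $SK_1(\mathcal{O}_K) = 0$ --- so condition $\textbf{P}$ must be what makes it work.

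The input I would aim to establish from condition $\textbf{P}$ is the transitivity statement $Um_{d+1}(R) = \pi_{d+1}E_{d+1}(R)$, i.e.\ the hypothesis that makes injective $K_1$-stability hold already at level $d+1$. Heuristically, condition $\textbf{P}$ forces the real points of $\mathrm{Spec}(R)$ to lie on a proper closed subset --- for a domain, the intersection of the real maximal ideals having height $\ge 1$ means it contains a nonzero element --- so that the dimension governing Bass's perturbation argument over $\mathbb{R}$ drops to $\le d-1$, the remaining complex maximal ideals being harmless; the regular-domain hypothesis is what makes this precise and carries the determinant/orientation bookkeeping. The $GL$-version $Um_{d+1}(R) = \pi_{d+1}GL_{d+1}(R)$ is exactly Theorem~\ref{T1.1}, and the passage from $GL_{d+1}$ to $SL_{d+1}$ is cheap (rescale one row of a completion by the inverse of its determinant, which is legitimate because a matrix of unit determinant over a commutative ring is invertible); the substantive point is the further passage to $E_{d+1}$. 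I expect this --- squeezing the extra unit of stability out of condition $\textbf{P}$ --- to be the main obstacle.

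Finally, granting $Um_{d+1}(R) = \pi_{d+1}E_{d+1}(R)$ alongside the classical $Um_{d+2}(R) = \pi_{d+2}E_{d+2}(R)$, the outstanding destabilization $E_{d+2}(R)\cap(SL_{d+1}(R)\oplus 1) = E_{d+1}(R)\oplus 1$ follows by the standard Mennicke--Newman/stabilizer computation, now licensed at this level: transitivity of $E_{d+1}(R)$ on $Um_{d+1}(R)$ identifies $\mathrm{Stab}_{E_{d+2}(R)}(\pi_{d+2})$ with the subgroup generated by $E_{d+1}(R)\oplus 1$ and the elementary translations fixing $\pi_{d+2}$, so for $\alpha \in SL_{d+1}(R)$ with $\alpha\oplus 1 \in E_{d+2}(R)$ one has $\alpha \oplus 1$ in that subgroup, and comparing last rows and last columns kills the translation factors, forcing $\alpha \oplus 1 \in E_{d+1}(R)\oplus 1$ and hence $\alpha \in E_{d+1}(R)$. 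Together with the surjectivity above, this yields the claimed bijection $SL_{d+1}(R)/E_{d+1}(R) \cong SK_1(R)$.
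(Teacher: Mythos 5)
First, a point of reference: the paper does not prove Theorem \ref{T2.4} at all --- it is quoted verbatim from \cite[Theorem 5.1]{B} --- so there is no internal proof to compare against; your proposal has to stand on its own. Its formal outer layer is fine: $sr(R)\le d+1$ gives surjectivity of $SL_{d+1}(R)/E_{d+1}(R)\to SK_1(R)$ and reduces injectivity to the single non-stable destabilization $E_{d+2}(R)\cap(SL_{d+1}(R)\oplus 1)=E_{d+1}(R)\oplus 1$. But the substance lies entirely in what comes after, and that is missing. The input you propose to extract from condition $\textbf{P}$, namely $Um_{d+1}(R)=\pi_{d+1}E_{d+1}(R)$, is exactly what you do not establish (you call it ``the main obstacle'' yourself), and it is not among the results of \cite{B} used in this paper: Theorem \ref{T1.1} gives completability of unimodular rows of length $d+1$ (a $GL_{d+1}$-statement), and Theorem \ref{T2.5} gives the divisibility-type statement $v\varphi=(w_1,\ldots,w_{d+1}^n)$ with $\varphi$ elementary. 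These say the orbit set $Um_{d+1}(R)/E_{d+1}(R)$ is divisible and consists of completable rows, not that it is a single point, and the heuristic about real points lying on a proper closed subset is not a substitute for a proof.

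Second, even granting $E_{d+1}$-transitivity on $Um_{d+1}(R)$, the final step is not the ``standard'' computation you invoke. Identifying $\mathrm{Stab}_{E_{d+2}(R)}(\pi_{d+2})$ with the subgroup generated by $E_{d+1}(R)\oplus 1$ and the elementary translations fixing $\pi_{d+2}$ is precisely the content of injective stability; in the classical arguments it is extracted from the stable range condition one level lower (essentially $sr(R)\le d$), not from transitivity of $E_{d+1}(R)$ on $Um_{d+1}(R)$ alone. Moreover, an element of the subgroup generated by $E_{d+1}(R)\oplus 1$ and the two translation groups need not be a product of one factor of each kind, so ``comparing last rows and last columns'' does not eliminate the translation factors as claimed. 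The known route to injectivity below the stable range for affine algebras (Rao--van der Kallen, which is what \cite{B} builds on) goes through the weak Mennicke symbol and the divisibility of $Um_{d+1}(R)/E_{d+1}(R)$ --- i.e.\ through Theorem \ref{T2.5} --- rather than through transitivity plus a stabilizer computation. As it stands, your proposal is a correct reduction followed by two unproven claims: one you acknowledge, and one you assert as standard when it is in fact the crux.
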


\begin{Thm}[{{\cite[Theorem 4.1, Corollary 4.3]{B}}}]\label{T2.5}
Let $R$ be a ring satisfying condition $\textbf{P}$ from Theorem \ref{T1.1}. Then, for any unimodular row $v \in Um_{d+1}(R)$ and integer $n \geq 1$, there are $w = (w_{1},...,w_{d+1}) \in Um_{d+1}(R)$ and $\varphi \in E_{d+1}(R)$ such that $v \varphi = (w_{1},...,w_{d+1}^{n})$.
\end{Thm}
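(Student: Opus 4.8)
The plan is to move $v$ within its $E_{d+1}(R)$-orbit to a row $(u_1,\ldots,u_{d+1})$ whose last coordinate becomes an $n$-th power modulo the ideal $I:=\langle u_1,\ldots,u_d\rangle$ generated by the first $d$ coordinates, and then to clear the difference by elementary operations. Thus the first task is a \emph{general position step}: produce $\varphi_1\in E_{d+1}(R)$ with $v\varphi_1=(u_1,\ldots,u_{d+1})$ such that $\mathrm{ht}(I)=d$ and $V(I)$ contains no real maximal ideal of $R$. Since $\dim R=d$, the ring $R/I$ is then Artinian, hence a finite product of Artinian local $\mathbb{R}$-algebras $(A_j,\mathfrak{m}_j)$; and since every maximal ideal of $R/I$ contracts to a non-real maximal ideal of $R$, each residue field $A_j/\mathfrak{m}_j$ is a finite extension of $\mathbb{R}$ different from $\mathbb{R}$, hence equal to $\mathbb{C}$.

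Granting this, the rest is formal. Because $(u_1,\ldots,u_{d+1})$ is unimodular, $\bar u_{d+1}$ is a unit of $R/I$, and the group $(R/I)^{\times}=\prod_j A_j^{\times}$ is $n$-divisible: the $n$-th power map is surjective on each $(A_j/\mathfrak{m}_j)^{\times}=\mathbb{C}^{\times}$ since $\mathbb{C}$ is algebraically closed, and it is bijective on each $1+\mathfrak{m}_j$ since $\mathfrak{m}_j$ is nilpotent and $n$ is invertible in $R$ (as $\mathbb{Q}\subseteq R$); hence it is surjective on $A_j^{\times}$. Choose $w_{d+1}\in R$ with $\bar w_{d+1}^{\,n}=\bar u_{d+1}$ in $R/I$; then $u_{d+1}-w_{d+1}^n=\sum_{i=1}^d c_i u_i$ for suitable $c_i\in R$, and subtracting $c_i u_i$ from the last coordinate for $i=1,\ldots,d$ defines an elementary matrix $\varphi_2\in E_{d+1}(R)$ with $v\varphi_1\varphi_2=(u_1,\ldots,u_d,w_{d+1}^n)$. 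Setting $w:=(u_1,\ldots,u_d,w_{d+1})$, which is unimodular because $\langle u_1,\ldots,u_d,w_{d+1}\rangle\supseteq\langle u_1,\ldots,u_d,w_{d+1}^n\rangle=R$, and $\varphi:=\varphi_1\varphi_2$, we obtain the assertion.

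The crux, and the only place where condition $\textbf{P}$ is used, is the general position step. If $R$ has no real maximal ideals, the condition on $V(I)$ is vacuous, and an Eisenbud--Evans-type argument (pulling $u_{d+1}$ generically into $u_1,\ldots,u_d$) suffices to make $\mathrm{ht}(I)=d$. If instead the intersection of all real maximal ideals has height $\geq 1$, pick $0\neq f\in R$ lying in every real maximal ideal; then $\dim R/(f)\leq d-1$, so by Bass's stable range theorem $E_{d+1}(R/(f))$ acts transitively on $Um_{d+1}(R/(f))$, and lifting to $E_{d+1}(R)$ a transformation carrying $\bar v$ to $(1,0,\ldots,0)$ we may assume $u_1\equiv 1$ and $u_2,\ldots,u_{d+1}\equiv 0\pmod{(f)}$. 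Then $I+(f)=R$, so $V(I)$ is disjoint from $V(f)$, hence from every real maximal ideal; it remains to raise $\mathrm{ht}(I)$ to $d$ without altering $u_1$, which one does by writing $u_i=f\tilde u_i$ for $i\geq 2$, observing that $(\tilde u_2,\ldots,\tilde u_{d+1})$ is unimodular over $R/(u_1)$ (where $f$ is a unit) with $\dim R/(u_1)\leq d-1$, and applying the Eisenbud--Evans argument there while only adding multiples of $u_{d+1}$ to $u_2,\ldots,u_d$. I expect the genuinely delicate point to be precisely this interplay: keeping $I$ comaximal with $(f)$ while simultaneously arranging it to be a height-$d$ complete intersection. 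The favourable dimension count $0+(d-1)<d$ is what makes it possible, and standard basic element theory should carry it through.
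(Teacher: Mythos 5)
This theorem is imported from \cite{B} (Theorem 4.1 and Corollary 4.3); the present paper gives no proof of it, so there is no internal argument to compare against. Your proposal is the standard strategy behind Banerjee's result and is correct in outline: put the row in general position so that $I=\langle u_1,\dots,u_d\rangle$ has height $d$ and $V(I)$ misses the real maximal ideals, note that $R/I$ is then a product of Artinian local $\mathbb{R}$-algebras with residue field $\mathbb{C}$, whose unit group is $n$-divisible (divisibility of $\mathbb{C}^{\times}$ plus unique $n$-divisibility of $1+\mathfrak{m}$ since $n\in R^{\times}$), and absorb $u_{d+1}-w_{d+1}^n\in I$ by elementary operations. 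Two small corrections to your ``crux'' step. First, in case 2 the detour through $R/(u_1)$ is both unnecessary and slightly shaky (the inequality $\dim R/(u_1)\le d-1$ needs $u_1$ to avoid the top-dimensional minimal primes, which you do not verify): since $u_{d+1}\equiv 0 \pmod{(f)}$, the ordinary Eisenbud--Evans move, adding multiples of $u_{d+1}$ to \emph{all} of $u_1,\dots,u_d$, already preserves the congruences $u_1\equiv 1$, $u_i\equiv 0 \pmod{(f)}$ and hence the comaximality $I+(f)=R$; there is no tension between raising the height and avoiding $V(f)$. Second, the statement is for an arbitrary ring satisfying condition \textbf{P}, not a domain, so ``pick $0\neq f$'' should be ``pick $f$ in the intersection of the real maximal ideals avoiding all minimal primes'' (possible precisely because that intersection has height $\ge 1$) to guarantee $\dim R/(f)\le d-1$. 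With those repairs the argument goes through and agrees with the expected proof in \cite{B}.
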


\section{Results}\label{Results}\label{3}

\begin{Thm}\label{T3.1}
Let $R$ be a regular integral domain satisfying condition $\textbf{P}$ from Theorem \ref{T1.1}. Let $\chi \in A_{d+1}(R)$ be an alternating invertible matrix of rank $d+1$. Then $Sp (\chi)$ acts transitively on $Um_{d+1}(R)$.
\end{Thm}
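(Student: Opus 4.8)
The plan is to reduce the transitivity of the $Sp(\chi)$-action on $Um_{d+1}(R)$ (with $d=3$, so we are dealing with unimodular rows of length $4$) to a combination of the three tools assembled in Section~\ref{2}: the factorization $v\varphi = (w_1,\dots,w_{d+1}^n)$ for an elementary $\varphi$ from Theorem~\ref{T2.5}, the identification $SL_{d+1}(R)/E_{d+1}(R) \xrightarrow{\sim} SK_1(R)$ from Theorem~\ref{T2.4}, and the structural lemmas on the hyperbolic map $H$ and on symplectic transitivity, namely Lemmas~\ref{L2.1}, \ref{L2.2} and \ref{L2.3}. Fix $\chi \in A_{d+1}(R)$ and a unimodular row $v \in Um_{d+1}(R)$; the goal is to find $\sigma \in Sp(\chi)$ with $\pi_{d+1}\sigma = v$, equivalently to move $v$ to the standard row $\pi_{d+1}=(0,\dots,0,1)$.

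First I would use Theorem~\ref{T2.5} with $n=2$ (the key point being that we can raise the last coordinate to an \emph{even} power) to produce $w \in Um_{d+1}(R)$ and $\varphi \in E_{d+1}(R)$ with $v\varphi = (w_1,\dots,w_d,w_{d+1}^2)$. Since $E_{d+1}(R) \subseteq SL_{d+1}(R)$ acts on $Um_{d+1}(R)$, and since I will ultimately want to compare everything inside a symplectic group, the strategy is: handle the row $v' := (w_1,\dots,w_d,w_{d+1}^2)$ first, then show that the elementary matrix $\varphi$ relating $v$ and $v'$ can be absorbed. For $v'$, the square in the last coordinate is exactly what makes Lemma~\ref{L2.3} applicable: choosing sections appropriately, the Suslin matrix $\alpha_{d+1}(v',w')$ has class in $SK_1(R)$ which is a $1$-fold multiple — hence just an element — but more to the point, by Suslin's result ($\beta(v,w)$ has first row $(v_1,\dots,v_{d+1}^{(d-1)!})$ and $[\beta(v,w)]=[\alpha_{d+1}(v,w)]$ in $K_1(R)$) and the parity trick of Lemma~\ref{L2.3}, I can arrange that the relevant class lies in $\ker(H)$ — here is where the definition of $W_E(R)$ and the exact sequence $SK_1(R)\xrightarrow{H} W_E(R)$ enters, together with the hypothesis that ultimately forces the obstruction to vanish. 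Applying Lemma~\ref{L2.1} to this class and the matrix $\chi$, I obtain, after stabilizing by some $\psi_{2m}$, a matrix $\varphi' \in SL_{d+1+2m}(R)$ with the same $K_1$-class that is symplectic with respect to $\chi \perp \psi_{2m}$.

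The next step is to descend from the stabilized situation back to rank $d+1$: this is precisely what Lemma~\ref{L2.2} is designed for, provided I can verify its hypothesis $Um^t_{2n+2}(R) = (E_{2n+2}(R)\cap Sp(\chi'))\,e_{2n+2}$ for the relevant $\chi'$ and $n$. For rings of dimension $d=3$ this transitivity of the elementary symplectic group on unimodular columns of length $\geq 6$ should follow from standard stability results (the symplectic analogue of Bass's cancellation, valid once the rank exceeds $d+1$); alternatively one invokes it from the stable range hypotheses available under condition \textbf{P}. Feeding this into Lemma~\ref{L2.2} converts the stabilized symplectic conjugation into an honest equality $\psi^t \chi_2 \psi = \chi_1$ at rank $d+1$ with $[\psi]=[\varphi']$ in $K_1(R)$, which is the bridge allowing one to realize the required transformation inside $Sp(\chi)$ itself rather than a stabilization.

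Finally I would reassemble the pieces: the symplectic matrix produced above moves $v'=(w_1,\dots,w_d,w_{d+1}^2)$ into the orbit of $\pi_{d+1}$ under $Sp(\chi)$, and then the elementary matrix $\varphi$ from the first step must be dealt with — since $\varphi \in E_{d+1}(R)$, one uses that elementary matrices, being products of elementary generators, can be conjugated into or compensated by elements of $Sp(\chi)$ after possibly adjusting $\chi$ within its equivalence class in $W_E(R)$, so that the composite lands $v$ on $\pi_{d+1}$. I expect the \textbf{main obstacle} to be the bookkeeping around stabilization: Lemma~\ref{L2.1} outputs a symplectic matrix only after adding copies of $\psi_2$, and one must carefully track the alternating form through these stabilizations so that Lemma~\ref{L2.2}'s hypothesis can be checked and its conclusion pulled back to the original rank $d+1$ without losing control of the $K_1$-class; verifying the elementary symplectic transitivity hypothesis of Lemma~\ref{L2.2} in the stabilized range is the technical heart, and getting the parity in Theorem~\ref{T2.5} and Lemma~\ref{L2.3} to line up so that the obstruction class genuinely dies is the conceptual heart.
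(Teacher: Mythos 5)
Your overall architecture matches the paper's: reduce to finding $\varphi\in SL_{d+1}(R)$ with first row $v$ whose class in $K_1(R)$ lies in the image of $K_1Sp(R)$ (equivalently in $\ker H$), use Lemma \ref{L2.1} to produce a stabilized symplectic representative, descend via Lemma \ref{L2.2} (whose transitivity hypothesis is indeed supplied by Bass's stability theorem together with \cite[Lemma 5.5]{SV}), and finish with Theorem \ref{T2.4} and the equality $\pi_{1,d+1}E_{d+1}(R)=\pi_{1,d+1}(E_{d+1}(R)\cap Sp(\chi))$. That skeleton is sound. But the conceptual heart — the reason the obstruction class in $W_E(R)$ actually dies — is missing from your proposal, and your setup makes it unobtainable.

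Concretely, two problems. First, the exponent: you apply Theorem \ref{T2.5} with $n=2$, producing $(w_1,\dots,w_d,w_{d+1}^2)$. But the only matrices whose $K_1$-classes you can control are the Suslin-type matrices $\beta_{d+1}(v',w')$, whose first row is $(v'_1,\dots,(v'_{d+1})^{d!})$. To realize your row as such a first row you would need $d!\mid 2$, which fails for $d=3$. The paper instead takes $n=2\,d!^{\,2}$, so that the row is the first row of $\beta_{d+1}(v',w')$ with $v'=(v_1,\dots,v_{d+1}^{2d!})$, and then Lemma \ref{L2.3} says $[\alpha_{d+1}(v',w')]=[\beta_{d+1}(v',w')]$ is a $d!$-fold multiple in $SK_1$. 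With $n=2$ Lemma \ref{L2.3} yields only a ``$1$-fold multiple,'' i.e., no information — as you yourself observe before moving on. Second, even with the correct exponent, the assertion ``I can arrange that the relevant class lies in $\ker(H)$'' is exactly the statement that needs proof, and no mechanism in your write-up produces it. The paper's argument splits by the residue of $d+1$ modulo $4$: for $d+1\equiv 2$ it quotes \cite[Proposition 3.3.3]{AF} directly, while for $d+1\equiv 0$ (which is the case $d=3$ you care about) it passes to the universal ring $S_{2(d+1)-1}$, uses the computation $W_E(S_{2(d+1)-1})=\mathbb{Z}/2\mathbb{Z}$ so that a $d!$-fold multiple (with $d!$ even) maps to $0$ under $H$, and then pushes the vanishing forward to $R$ along the classifying homomorphism $S_{2(d+1)-1}\to R$. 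None of this — the universal ring, the $2$-torsion computation, the functoriality square — appears in your proposal, so the step on which everything else hinges is a genuine gap rather than mere bookkeeping.
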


\begin{proof}
First we realize that it suffices to show the following statement: If $v = (v_{1},...,v_{d+1}) \in Um_{d+1}(R)$, then there exists $\varphi \in SL_{d+1}(R)$ with first row $v$ whose class $[\varphi] \in K_{1} (R)$ lies in the image of $K_{1}{Sp} (R) \xrightarrow{f} K_{1} (R)$.\\
Indeed, assume that there is $\varphi$ as above; then Lemma \ref{L2.1} shows that, for some $m \geq 0$, there is $\psi \in Sp(\chi \perp \psi_{2m})$ such that $[\psi]=[\varphi] \in K_1 (R)$. As a matter of fact, we can actually assume that $m = 0$ by Lemma \ref{L2.2} because $E_{d+1+2n}(R) \cap Sp(\chi \perp \psi_{2n})$ acts transitively on $Um_{d+1+2n} (R)$ for $n \geq 1$ by \cite[Chapter IV, Theorem 3.4]{HB} and \cite[Lemma 5.5]{SV}. By Theorem \ref{T2.4}, the map $SL_{d+1} (R)/E_{d+1} (R) \rightarrow SK_{1} (R)$ is injective. Therefore $\varphi {\psi}^{-1} \in E_{d+1} (R)$. By \cite[Lemma 5.5]{SV} the equality $\pi_{1,d+1} E_{d+1} (R) = \pi_{1,d+1} ({E}_{d+1} (R) \cap Sp(\chi))$ holds, so there exists $\psi' \in {E}_{d+1} (R) \cap Sp (\chi)$ such that $\pi_{1,d+1}\varphi {\psi}^{-1} = \pi_{1,d+1} \psi'$. As a consequence, $v = \pi_{1,d+1} \varphi = \pi_{1,d+1} \psi' \psi$ is indeed the first row of a matrix in $Sp (\chi)$.\\
So let us now show that for $v = (v_{1},...,v_{d+1}) \in Um_{d+1}(R)$ there exists $\varphi \in SL_{d+1}(R)$ with first row $v$ whose class $[\varphi] \in K_{1} (R)$ lies in the image of $f: K_{1}{Sp} (R) \rightarrow K_{1} (R)$ or, equivalently, in the kernel of $H: SK_{1}(R) \rightarrow W_E (R)$. First of all, we notice that by Theorem \ref{T2.5} any unimodular row of length $d+1$ over $R$ can actually be transformed via elementary matrices to a row of the form $v = (v_{1},...,v_{d+1}^{2{d!}^{2}})$. Therefore we only need to consider rows of the form $v = (v_{1},...,v_{d+1}^{2{d!}^{2}})$.\\
For this purpose, we also consider the row $v' = (v_{1},...,v_{d+1}^{2d!})$ with a chosen section $w'$; then the first row of the matrix $\beta_{d+1}(v',w') \in SL_{d+1}(R)$ is precisely the row $v$. It is well-known that $[\beta_{d+1}(v',w')]$ lies in the image of the forgetful map $K_{1}{Sp} (R) \xrightarrow{f} K_{1} (R)$ if $d+1 \equiv 2~mod~4$ (cf. \cite[Proposition 3.3.3]{AF}), which finishes the proof of the theorem in this case.\\
It remains to prove the theorem for $d+1 \equiv 0~mod~4$. In this case, by \cite[Proposition 2.7]{Sy1}, we have that $W_E (S_{2(d+1)-1}) = \mathbb{Z}/2\mathbb{Z}$ and therefore $W_E (S_{2(d+1)-1})$ is $2$-torsion. If we now let $x'=(x_{1},...,x_{d},x_{d+1}^{2d!}) \in Um_{d+1}(S_{2(d+1)-1})$ with a chosen section $y'$, then Lemma \ref{L2.3} shows that $[\beta_{d+1} (x',y')] \in SK_{1}(S_{2(d+1)-1})$ is a $d!$-fold multiple of an element in $SK_{1}(S_{2(d+1)-1})$. Consequently, we must have $H([\beta_{d+1}(x',y')]) = 0$ as $W_E (S_{2(d+1)-1})$ is $2$-torsion.\\
This implies that $H([\beta_{d+1}(v',w')]) = 0$ as follows: We let $v''=(v_{1},...,v_{d+1}) \in Um_{d+1}(R)$ with some chosen section $w''$. We obtain an induced ring homomorphism $\varphi: S_{2(d+1)-1} \rightarrow R, (x,y) \mapsto (v'',w'')$ fitting into a commutative diagram
\begin{center}
$\begin{xy}
  \xymatrix{
      SK_1 (S_{2(d+1)-1}) \ar[r]^{H} \ar[d]_{\varphi_{\ast}}    &   W_E (S_{2(d+1)-1}) \ar[d]^{\varphi_{\ast}}  \\
      SK_{1} (R) \ar[r]_{H}             &   W_E (R).   
  }
\end{xy}$
\end{center}
Since $\varphi_{\ast}: SK_{1} (S_{2(d+1)-1}) \rightarrow SK_{1}(R)$ sends $[\beta_{d+1} (x',y')]$ to $[\beta_{d+1} (v',w')]$, the commutative diagram implies that indeed $H([\beta_{d+1}(v',w')]) = 0 \in W_E (R)$. This finishes the proof of the theorem in case $d+1 \equiv 0~mod~4$.
\end{proof}

\begin{Rem}
Theorem \ref{T3.1} is essentially a consequence of the results obtained in \cite{B} and \cite{Sy2}. It was already observed in \cite[Remark 5.4]{B} that the proof of \cite[Theorem 3.6]{Sy2} together with Theorem \ref{T2.4} and Theorem \ref{T2.5} imply that $Sp_{d+1}(R)$ acts transitively on $Um_{d+1}(R)$. Lemma \ref{L2.1} and Lemma \ref{L2.2} enable us to prove a more general statement in Theorem \ref{T3.1}. For the convenience of the reader, we have given a full proof of this statement.
\end{Rem}

It follows from \cite[Theorem 3.1]{Sy1} that there is a well-defined map

\begin{center}
$V_{\theta_{0}}: Um_3 (R)/SL_3 (R) \rightarrow \tilde{V}_{SL}(R)$
\end{center}

called Vaserstein symbol modulo SL associated to any fixed isomorphism $\theta_{0}: \det(R^2) \xrightarrow{\cong} R$, where $\tilde{V}_{SL}(R)$ is an abelian group introduced in \cite[Section 2.C]{Sy1}; as explained in \cite[Section 2.C]{Sy1}, the abelian group $\tilde{V}_{SL}(R)$ is canonically isomorphic to the group $W_{SL}(R)$ from Section \ref{2}.

\begin{Thm}\label{T3.3}\label{T3.3}
Let $R$ be a regular integral domain of dimension $d=3$ satisfying condition $\textbf{P}$ from Theorem \ref{T1.1}. Let $\theta_{0}: R \xrightarrow{\cong} \det (R^2)$ be a fixed isomorphism. Then $V_{\theta_{0}}: Um_{3}(R)/SL_{3}(R) \xrightarrow{\cong} \tilde{V}_{SL}(R)$ is a bijection.
\end{Thm}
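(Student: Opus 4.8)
The plan is to establish bijectivity of $V_{\theta_0}$ by proving surjectivity and injectivity separately, in each case reducing the problem to the transitivity statement of Theorem \ref{T3.1}. The key structural input is that $\tilde{V}_{SL}(R) \cong W_{SL}(R)$, so that elements of the target are represented by (classes of) invertible alternating matrices, and the Vaserstein symbol sends a unimodular row $v \in Um_3(R)$ with section $w$ to the class of an alternating matrix $V(v,w)$ of rank $4$ built functorially out of $(v,w)$ (the $4 \times 4$ alternating matrix with top row essentially $(0,v_1,v_2,v_3)$, following the classical construction of Vaserstein recalled in \cite{Sy1}).

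For surjectivity, I would start from an arbitrary alternating invertible matrix $\chi$ of rank $4$ representing a class in $W_{SL}(R) \cong \tilde{V}_{SL}(R)$; since $d+1 = 4$, Theorem \ref{T3.1} applies and tells us that $Sp(\chi)$ acts transitively on $Um_4(R)$. In particular, $e_4 \in Um_4^t(R)$ (or $\pi_4$) lies in the $Sp(\chi)$-orbit of the corresponding coordinate vector for the standard form $\psi_4$; translating this back through the explicit description of $\tilde{V}_{SL}(R)$ should show that $\chi$ is equivalent (modulo the relations defining $\tilde{V}_{SL}(R)$, i.e.\ congruence by $SL_4$ up to stabilization) to a matrix in the image of the Vaserstein symbol, namely some $V(v,w)$. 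This is precisely the mechanism already used in \cite{Sy1} to analyze injectivity/surjectivity of the Vaserstein symbol in terms of orbits: the symbol $V_{\theta_0}$ is surjective once every alternating form of rank $4$ is, up to the relevant equivalence, of the special shape coming from a unimodular row, and the $Sp(\chi)$-transitivity gives exactly that.

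For injectivity, suppose $v, v' \in Um_3(R)$ with sections $w, w'$ satisfy $V_{\theta_0}(v) = V_{\theta_0}(v')$, i.e.\ the alternating matrices $V(v,w)$ and $V(v',w')$ of rank $4$ are equivalent in $\tilde{V}_{SL}(R)$. By the results of \cite{Sy1} relating the fibers of the Vaserstein symbol to symplectic orbits of unimodular rows — and after possibly stabilizing and applying Lemma \ref{L2.2}-type arguments to descend back to rank $4$, just as in the proof of Theorem \ref{T3.1} — this equivalence produces a symplectic transformation (with respect to a fixed alternating form $\chi$ of rank $4$, which we may take to be $\psi_4$) carrying the column associated to $v$ to the column associated to $v'$. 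Again invoking Theorem \ref{T3.1}, $Sp(\psi_4) = Sp_4(R)$ acts transitively on $Um_4(R)$, and the compatibility of this action with the $SL_3(R)$-action on $Um_3(R)$ (established in \cite{Sy1}) forces $v$ and $v'$ to lie in the same $SL_3(R)$-orbit, hence $[v] = [v']$ in $Um_3(R)/SL_3(R)$.

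The main obstacle I anticipate is the bookkeeping needed to pass between the rank-$4$ symplectic picture and the rank-$3$ $SL_3$-picture in a way that is faithful to the group $\tilde{V}_{SL}(R)$ rather than $\tilde{V}_E(R)$ or the unreduced $W'_E(R)$: one must check that the stabilization and descent steps (controlled by Lemma \ref{L2.1}, Lemma \ref{L2.2}, and the transitivity of $E_{4+2n}(R) \cap Sp(\psi_{4+2n})$ on $Um_{4+2n}(R)$) do not introduce spurious identifications, and that the orbit correspondence of \cite{Sy1} is being applied with the correct $SL$-versus-$E$ decorations. Once these compatibilities are in place — and they are largely a matter of citing \cite{Sy1} together with Theorem \ref{T3.1} — the bijectivity follows. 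In fact, since the injectivity of $V_{\theta_0}$ in the relevant generality is already available from \cite{Sy1} under a transitivity hypothesis on symplectic group actions, the genuine new content here is the surjectivity, which is where Theorem \ref{T3.1} is essential.
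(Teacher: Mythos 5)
Your proposal follows the same basic route as the paper: reduce the bijectivity of $V_{\theta_{0}}$ to transitivity criteria established in \cite{Sy1} and feed in Theorem \ref{T3.1}. However, you have swapped the roles of the two transitivity hypotheses, and you never verify one of them. The criterion from \cite[Theorems 3.2 and 3.6]{Sy1} is: $V_{\theta_{0}}$ is bijective provided (a) $SL_{5}(R)$ acts transitively on $Um_{5}(R)$ and (b) $Sp(\chi)$ acts transitively on $Um_{4}^{t}(R)$ for every invertible alternating $4\times 4$-matrix $\chi$. Hypothesis (a), which your proposal does not address at all, is what drives the surjectivity: it is needed to reduce alternating forms of rank $\geq 6$ to rank $4$, and it holds for free by Bass's stability theorem \cite[Chapter IV, Theorem 3.4]{HB} since $\dim R = 3$. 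By contrast, an invertible alternating $4\times 4$-matrix with the appropriate Pfaffian is already essentially of the Vaserstein shape $V(v,w)$ (read $v$ off the first row and $w$ off the complementary entries; the Pfaffian condition makes $w$ a section of $v$), so no symplectic transitivity is needed at that stage --- your surjectivity paragraph, which invokes $Sp(\chi)$-transitivity to put a rank-$4$ form into Vaserstein shape while silently assuming every class of $\tilde{V}_{SL}(R)$ has a rank-$4$ representative, both overshoots and undershoots. Hypothesis (b) is exactly what Theorem \ref{T3.1} supplies (with $d+1=4$, after passing from right actions on rows to left actions on columns by replacing $\chi$ with $\chi^{-1}$ and transposing, as the paper does), and it is the input to the \emph{injectivity} statement. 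Your closing claim that ``the genuine new content here is the surjectivity'' is therefore backwards: the new transitivity result feeds into injectivity, while surjectivity is the part that comes from classical stability. With these two corrections --- verifying (a) via Bass and routing Theorem \ref{T3.1} into (b) for injectivity --- your argument collapses to the paper's proof.
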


\begin{proof}
It follows from \cite[Theorems 3.2 and 3.6]{Sy1} that the generalized Vaserstein symbol modulo SL is bijective if $SL_{5}(R)$ acts transitively on $Um_{5}(R)$ and $Sp (\chi)$ acts transitively on $Um_{4}^{t}(R)$ for any invertible alternating $4\times4$-matrix $\chi$ over $R$. It follows directly from \cite[Chapter IV, Theorem 3.4]{HB} that $SL_{5}(R)$ acts transitively on $Um_{5}(R)$. Theorem \ref{T3.1} shows that $Sp (\chi^{-1})$ acts transitively on the right on $Um_{4}(R)$ for any invertible alternating $4 \times 4$-matrix $\chi$; the fact that $Sp(\chi)$ acts transitively on the left on $Um_{4}^{t}(R)$ then follows by transposition.
\end{proof}

\begin{Thm}\label{T3.4}\label{T3.4}
Let $R$ be a regular integral domain of dimension $d=3$ satisfying condition $\textbf{P}$ from Theorem \ref{T1.1}. Then all stably free $R$-modules of rank $2$ are free if and only if $W_{SL}(R) = 0$.
\end{Thm}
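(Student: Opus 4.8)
The plan is to connect stably free $R$-modules of rank $2$ to unimodular rows of length $3$ via the standard dictionary, and then use the bijectivity of the Vaserstein symbol modulo SL established in Theorem \ref{T3.3}. First I would recall that every stably free $R$-module of rank $2$ is the kernel of some surjection $R^3 \twoheadrightarrow R$, i.e.\ arises as $P(v)$ for some $v \in Um_3(R)$, and that two such rows give isomorphic modules precisely when they lie in the same orbit under the action of $GL_3(R)$ on $Um_3(R)$; moreover $P(v)$ is free if and only if $v$ can be completed to an invertible matrix, i.e.\ $v$ lies in the orbit of $\pi_{1,3}$ under $GL_3(R)$. Since $R$ has dimension $3$, classical stability results (\cite[Chapter IV, Theorem 3.4]{HB}) give that $SL_3(R)$ acts transitively on a $GL_3(R)$-orbit if and only if... more carefully, the point is that $GL_3(R) = SL_3(R) \cdot \mathrm{GL}_1(R)$-scaling acts, and by Suslin's results the distinction between $SL_3$ and $GL_3$ orbits is controlled by units; in any case $P(v)$ is free iff the class of $v$ in $Um_3(R)/SL_3(R)$ equals the class of $\pi_{1,3}$ (after possibly adjusting by a unit, which does not change the module up to isomorphism). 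Thus \emph{all} stably free $R$-modules of rank $2$ are free if and only if $Um_3(R)/SL_3(R)$ is a singleton (equivalently, reduces to the orbit of $\pi_{1,3}$).

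Next I would invoke Theorem \ref{T3.3}: the Vaserstein symbol modulo SL gives a bijection $V_{\theta_0}\colon Um_3(R)/SL_3(R) \xrightarrow{\cong} \tilde V_{SL}(R)$, and $\tilde V_{SL}(R)$ is canonically isomorphic to $W_{SL}(R)$. Under this bijection the class of $\pi_{1,3}$ maps to the identity element $0$ (this is part of the construction of the Vaserstein symbol: trivial rows go to the neutral element of the group). Hence $Um_3(R)/SL_3(R)$ is a singleton if and only if $W_{SL}(R) = 0$. Combining with the previous paragraph gives the theorem.

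The main obstacle — and the point that deserves care — is the translation between the module-theoretic statement ``all stably free rank-$2$ modules are free'' and the set-theoretic statement ``$Um_3(R)/SL_3(R)$ is trivial.'' One has to be careful about the difference between the $GL_3(R)$-action and the $SL_3(R)$-action on $Um_3(R)$: a priori freeness of $P(v)$ corresponds to $v$ lying in the $GL_3(R)$-orbit of $\pi_{1,3}$, which is a union of $SL_3(R)$-orbits indexed by the cokernel of $R^\times \to R^\times$ given by determinants, but since scaling the last coordinate by a unit is realized by an element of $GL_3(R)$ and does not affect the isomorphism class of the kernel module, one checks that $v$ and $v\cdot\mathrm{diag}(u,1,\dots)$ define isomorphic modules; so $P(v)$ free $\iff$ $[v] = [\pi_{1,3}]$ in $Um_3(R)/SL_3(R)$ after all. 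I would spell this out using the orientation $\theta_0$ and the fact that $R$ is a domain (so $R^\times$ behaves well), citing \cite{Sy1} for the precise compatibility of $V_{\theta_0}$ with the module correspondence; the remaining steps are then immediate from Theorem \ref{T3.3} and the identification $\tilde V_{SL}(R) \cong W_{SL}(R)$.
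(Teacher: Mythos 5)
Your overall strategy matches the paper's: reduce to unimodular rows of length $3$ and then invoke Theorem \ref{T3.3} together with the identification $\tilde V_{SL}(R) \cong W_{SL}(R)$. There is, however, one genuine gap at the very first step. You assert as part of a ``standard dictionary'' that every stably free $R$-module of rank $2$ is the kernel of a surjection $R^3 \twoheadrightarrow R$, i.e.\ arises from some $v \in Um_3(R)$. This is not a formal fact: a priori one only has $P \oplus R^n \cong R^{n+2}$ for some $n \geq 1$, and Bass's stability theorem \cite[Chapter IV, Theorem 3.4]{HB} only lets you cancel down to $P \oplus R^2 \cong R^4$, since it requires the module being cancelled to have rank $\geq d+1 = 4$. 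The further reduction $P \oplus R \cong R^3$ --- which is equivalent to $P$ being presented by a row in $Um_3(R)$, because such a presentation forces $P \oplus R \cong R^3$ --- is exactly the statement that the rank-$3$ stably free module $P \oplus R$ is free. This is where condition \textbf{P} enters a second time: it is Banerjee's cancellation theorem (\cite[Theorem 2.8]{B}, i.e.\ Theorem \ref{T1.1} with $d=3$), which the paper cites explicitly at precisely this point. Without that input the dictionary you rely on breaks down.

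The remainder of your argument is correct and agrees with the paper's. Your concern about the $GL_3(R)$ versus $SL_3(R)$ actions is legitimate but resolves more simply than your sketch suggests: if $v$ is the first row of some $g \in GL_3(R)$ with $\det g = u$, then dividing a row of $g$ other than the first by $u$ yields a matrix in $SL_3(R)$ with the same first row, so the $GL_3(R)$-orbit of $\pi_{1,3}$ coincides with its $SL_3(R)$-orbit and $P(v)$ is free if and only if $[v] = [\pi_{1,3}]$ in $Um_3(R)/SL_3(R)$; the paper states this equivalence without further comment. The concluding step --- that $V_{\theta_0}$ is a bijection sending the trivial class to the neutral element of $\tilde V_{SL}(R) \cong W_{SL}(R)$ --- is exactly how the paper finishes.
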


\begin{proof}
Assume $P$ is a stably free $R$-module of rank $2$ and $P \oplus R^n \cong R^{n+2}$ for some integer $n \geq 1$. Then, by \cite[Theorem 2.8]{B}, we obtain an isomorphism $P \oplus R \cong R^3$. In particular, $P$ is defined via a unimodular row $v$ of length $3$ over $R$ and $P$ is free if and only if there is $\varphi \in SL_{3}(R)$ with first row $v$. The theorem now follows from Theorem \ref{T3.3} and the fact that $\tilde{V}_{SL}(R)$ is isomorphic to $W_{SL}(R)$.
\end{proof}

\begin{Rem}\label{R3.5}
Theorem \ref{T3.3} actually implies a slightly stronger statement than the statement of Theorem \ref{T3.4}: It follows from the discussion in \cite[Section 2.A]{Sy1} that the set $Um_{3}(R)/SL_{3}(R)$ in Theorem \ref{T3.3} corresponds precisely to the set of isomorphism classes of oriented stably free $R$-modules of rank $2$. This set turns out to be in bijection with the abelian group $W_{SL}(R)$ by Theorem \ref{T3.3} and therefore inherits an abelian group structure.
\end{Rem}

\end{document}